\numberwithin{equation}{section}% makes equat numb contain the section
\newtheorem{Thm}[equation]{Theorem}
\newtheorem*{Thm*}{Theorem}
\newtheorem{Lem}[equation]{Lemma}
\newtheorem{Cor}[equation]{Corollary}
\theoremstyle{remark}
\newtheorem{Def}[equation]{Definition}
\newtheorem{Exa}[equation]{Example}
\newtheorem{Conv}[equation]{Convention}
\newtheorem{Rem}[equation]{Remark}
\newtheorem{Rems}[equation]{Remarks}
\newcommand{\nc}{\newcommand}
\nc{\dmo}{\DeclareMathOperator}
\dmo{\Abelem}{Abelem}
\dmo{\Add}{Add}
\dmo{\Free}{Free}
\dmo{\Id}{Id}
\dmo{\Loc}{Loc}
\dmo{\rmR}{R}
\dmo{\Spc}{Spc}
\dmo{\thick}{thick}
\dmo{\chara}{char}%
\dmo{\Coh}{Coh}
\dmo{\coker}{coker}
\dmo{\colim}{colim}
\dmo{\cone}{cone}
\dmo{\Der}{D}% ground notation for derived categories
\dmo{\End}{End}
\dmo{\Ext}{Ext}
\dmo{\rmH}{H}
\dmo{\Hom}{Hom}
\dmo{\id}{id}
\dmo{\Img}{Im}
\dmo{\incl}{incl}
\dmo{\Ind}{Ind}
\dmo{\Ker}{Ker}
\dmo{\LL}{L}
\dmo{\Mod}{Mod}% sheaves of modules
\dmo{\modname}{mod}%
\dmo{\Mor}{Mor}%
\dmo{\Obj}{Obj}
\dmo{\Or}{Or}
\dmo{\pr}{pr}
\dmo{\Proj}{Proj}
\dmo{\qc}{qcoh}
\dmo{\Qcoh}{Qcoh}% quasi-coherent modules
\dmo{\rank}{rank}
\dmo{\Res}{Res}
\dmo{\Rname}{R}
\dmo{\SH}{SH}% ground name for cat of spectra
\dmo{\smallb}{b}% ground exponent for ``bounded''
\dmo{\smallperf}{perf}% ground exponent for ``perfect''
\dmo{\Spec}{Spec}
\dmo{\Stab}{Stab}% stable category of non-fin. gen. mod.
\dmo{\stab}{stab}% stable category of fin. gen. mod.
\dmo{\supp}{supp}
\dmo{\supph}{supph}
\dmo{\TTR}{TTR}% tt-rings
\dmo{\VB}{VB}% vector bundles
\nc{\ababs}{{\sl ab absurdo}}
\nc{\adh}[1]{\overline{#1}}% adherence
\nc{\adhpt}[1]{\adh{\{#1\}}}% adherence of a pt
\nc{\adj}{\dashv}
\nc{\adjto}{\rightleftarrows}
\nc{\aka}{{a.\,k.\,a.}\ }
\nc{\ala}{{\sl \`a la}\ }
\nc{\AddK}{\Add^{\Sigma}(\cat K)}
\nc{\Af}{\bbA_{f}}
\nc{\AMS}{\bbA\MModcat{S}}% most used
\nc{\bbA}{\mathbb{A}}
\nc{\bbB}{\mathbb{B}}
\nc{\bbC}{\mathbb{C}}
\nc{\bbF}{\mathbb{F}}
\nc{\bbN}{\mathbb{N}}
\nc{\bbP}{\mathbb{P}}
\nc{\bbR}{\mathbb{R}}
\nc{\bbZ}{\mathbb{Z}}
\nc{\BK}{B\MModcat{K}}% most used
\nc{\calO}{\mathcal{O}}
\nc{\cat}[1]{\mathscr{#1}}%or: \nc{\cat}[1]{\mathcal{#1}}
\nc{\cO}{\mathcal{O}}% for structure sheaf
\nc{\cV}{\mathcal{V}}% for support varieties
\nc{\doublequot}[3]{#1\backslash #2/#3}% double cosets
\nc{\Db}{\Der^{\smallb}}% derived bounded category
\nc{\DbG}{\Db(\kk G)}%most used
\nc{\Displ}{\displaystyle}
\nc{\Dperf}{\Der^{\smallperf}}% derived category of perfect compl
\nc{\Dqc}{\Der^{\qc}}
\nc{\eg}{{e.\,g.}}
\nc{\env}{{^{\operatorname{e}}}}
\nc{\eps}{\epsilon}
\nc{\FFree}{\,\text{--}\Free}%
\nc{\FFreecat}[1]{\FFree_{\cat #1}}
\nc{\FK}{\mathcal{F}(\cat K)}% Freyd category
\nc{\gm}{\mathfrak{m}}% prime m
\nc{\gp}{\mathfrak{p}}% prime p
\nc{\gq}{\mathfrak{q}}% prime q
\nc{\Gsets}{G\mathsf{-sets}}
\nc{\hook}{\hookrightarrow}
\nc{\HGK}{\doublequot HGK}% most used
\nc{\Homcat}[1]{\Hom_{\cat #1}}
\nc{\ideal}[1]{\langle #1\rangle}
\nc{\ie}{{i.e.}\ }
\nc{\into}{\mathop{\rightarrowtail}}
\nc{\inv}{^{-1}}
\nc{\isoto}{\buildrel \sim\over\to}
\nc{\isotoo}{\mathop{\buildrel \sim\over\too}}
\nc{\Idcat}[1]{\Id_{\cat{#1}}}
\nc{\kk}{\Bbbk}
\nc{\Kcat}[1]{#1\MModcat{K}}% modules over #1
\nc{\KP}{\cat{K}_{\cat P}}% local cat K at P
\nc{\loccit}{{\sl loc.\ cit.}}
\nc{\Lotimes}{\otimes^{\LL}}
\nc{\mmod}{\,\text{--}\modname}%
\nc{\mmodb}{\mmod^\sbull}%
\nc{\Mid}{\,\big|\,}
\nc{\MMod}{\,\text{-}\Mod}%
\nc{\MModcat}[1]{\MMod_{\cat #1}}%
\nc{\onto}{\mathop{\twoheadrightarrow}}
\nc{\op}{{^{\operatorname{op}}}}
\nc{\oto}[1]{\overset{#1}\to}
\nc{\otoo}[1]{\overset{#1}{\,\too\,}}
\nc{\ourfrac}[2]{\genfrac{}{}{0pt}{}{\scriptstyle #1}{\scriptstyle #2}}
\nc{\ouriff}{\Leftrightarrow}
\nc{\oursetminus}{\!\smallsetminus\!}
\nc{\potimes}[1]{^{\otimes #1}}% tensor power
\nc{\ptimes}[1]{^{\times #1}}% product power
\nc{\pushout}{\textrm{\rm p.o.}}
\nc{\PZG}{\cat C_{\bbZ}(\bbZ G)}% G-lattices
\nc{\qp}{q_{_{\scriptstyle \cat P}}\!}%
\nc{\quadtext}[1]{\quad\textrm{#1}\quad}
\nc{\restr}[1]{_{|{\scriptstyle #1}}}% to restrict a map
\nc{\Rcat}[1]{\Rname_{\cat #1}^\sbull}% the graded endom ring of cat #1
\nc{\Rf}{\RR f}
\nc{\RK}{\Rcat{K}}
\nc{\RR}{\mathsf{R}}%{\rmR\!}
\nc{\sbull}{{\scriptscriptstyle\bullet}}%\mathbf{\cdot}}%{}}
\nc{\smallmatrice}[1]{\left(\begin{smallmatrix} #1 \end{smallmatrix}\right)}
\nc{\suppcat}[1]{\supp(\cat #1)}
\nc{\SET}[2]{\big\{\,#1\Mid#2\,\big\}}
\nc{\SHfin}{\SH^{\text{\rm fin}}}% stab. hom. cat of finite spectra
\nc{\SpcAK}{\Spc(A\MModcat{K})}% most used
\nc{\SpcK}{\Spc(\cat K)}% most used
\nc{\SpcL}{\Spc(\cat L)}% most used
\nc{\then}{\Rightarrow}
\nc{\tideal}[1]{\ideal{#1}}%{\langle #1\rangle_{\!\s...style\otimes}}
\nc{\too}{\mathop{\longrightarrow}\limits}
\nc{\TTRK}{\TTR(\cat K)}% most used
\nc{\uA}{\underline{A}}
\nc{\unit}{\mathbb{1}}% unit for \otimes
\nc{\unitcat}[1]{\unit_{\cat #1}}% above for the cat #1
\nc{\xytriangle}[7]{\xymatrix@C=#7em{#1\ar[r]^-{\Displ #4} & #2 \ar[r]^-{\Displ #5}&#3\ar[r]^-{\Displ #6}&T #1}}
\begin{document}

%------------------------------------------------------------------------------

\title[The \'etale Neeman-Thomason theorem]{The derived category of an \'etale extension\\ and the separable Neeman-Thomason theorem}
\author{Paul Balmer}
\date{\today}

\address{Paul Balmer, Mathematics Department, UCLA, Los Angeles, CA 90095-1555, USA}
\email{balmer@math.ucla.edu}
\urladdr{http://www.math.ucla.edu/$\sim$balmer}

\begin{abstract}
We prove that \'etale morphisms of schemes yield separable extensions of derived categories. We then generalize the Neeman-Thomason Localization Theorem to separable extensions of triangulated categories.
\end{abstract}

\subjclass[2010]{14F20, 18E30}
\keywords{Neeman-Thomason, separable extension, \'etale, derived category}

\thanks{Research supported by NSF grant~DMS-1303073.}

\maketitle

\vskip-\baselineskip\vskip-\baselineskip

%------------------------------------------------------------------------------

\section{Introduction}
\medbreak
%------------------------------------------------------------------------------

Our purpose is to prove two theorems of independent interest, one about derived categories in algebraic geometry and one about general triangulated categories.

If $U\hook X$ is a Zariski open subscheme, it is well-known that the derived category of~$U$ can be described out of that of~$X$ via Bousfield localization, that is, via a purely triangular construction, not resorting to models. We generalize this result to an \'etale morphism $V\to X$ by replacing Bousfield localization by a more powerful triangular construction, namely \emph{separable extension of triangulated categories} in the sense of~\cite{Balmer11}. This is Theorem~\ref{thm:Dqc}.
%%%%%\emph{Separable extensions are to Bousfield localizations, what the \'etale topology is to the Zariski topology}.
Summarizing the scope of separable extensions of triangulated categories, they now cover:
\begin{enumerate}
\item Bousfield localizations (see Example~\ref{exa:sep-Bous} for explanations)
\item \'etale extensions in algebraic geometry (the above-mentioned Theorem~\ref{thm:Dqc})
\item restriction to subgroups in equivariant stable homotopy categories, in equivariant $KK$-theory and in equivariant derived categories, by~\cite{BalmerDellAmbrogioSanders14pp}.
\end{enumerate}
This profusion of sources motivates the study of separable extensions of triangulated categories \textsl{per se}.

Thus stimulated, and in view of the importance of Brown representability, we prove a general result about separable extensions of \emph{compactly-generated} triangulated categories, extending the Neeman-Thomason Localization Theorem. This is Theorem~\ref{thm:main}, where we give a simple criterion for such a separable extension to remain compactly-generated and describe what happens on compact objects.

%------------------------------------------------------------------------------
\medbreak
\section{Compact reminder}
\label{se:NT}%
\medbreak
%------------------------------------------------------------------------------

%
\begin{Conv}
All our schemes are assumed \emph{quasi-compact and quasi-separated}, even when not repeated. This is a very light assumption, satisfied by any noetherian or any affine scheme for instance. Recall that a topological space is quasi-separated if it admits a basis of quasi-compact open subsets. For a scheme, it means that the intersection of any two affine open subsets remains quasi-compact.
\end{Conv}

For a quasi-compact and quasi-separated scheme~$X$, let us denote by $\Dqc(X)$ the derived category of complexes of~$\cO_X$-modules with quasi-coherent homology. If the reader wants to assume $X$ separated, then $\Dqc(X)$ is simply the derived category of quasi-coherent $\cO_X$-modules itself, see~\cite[Cor.5.5]{BoekstedtNeeman93}. Let us recall two important properties of the triangulated category~$\Dqc(X)$.

First of all, for a quasi-compact open subscheme~$j:U\hook X$, the derived category $\Dqc(U)$ of the subscheme is a \emph{smashing Bousfield localization} of~$\Dqc(X)$. This means that the restriction functor $j^*:\Dqc(X)\to \Dqc(U)$ admits a fully faithful right adjoint $j_*:\Dqc(U)\to \Dqc(X)$ which commutes with arbitrary coproducts. In other words, if $Z=X-U$ denotes the closed complement of~$U$, we can identify $\Dqc(U)$ as the Verdier localization $\Dqc(X)/\Dqc_{Z}(X)$ of the ambient category by its subcategory $\Dqc_{Z}(X):=\Ker(j^*)=\SET{x\in\Dqc(X)}{x\restr{U}\cong 0\textrm{ in }\Dqc(U)}$ supported on~$Z$; furthermore the endofunctor $L:=j_*j^*$ on~$\Dqc(X)$, which receives the identity $\Id_{\Dqc(X)}\otoo{\lambda} j_*j^*$, is a Bousfield localization functor (\ie $L\lambda=\lambda L: L\isoto L^2$) and $L$ is moreover smashing, \ie $L$ commutes with coproducts. Perhaps we should also remind the reader that localization of triangulated categories (\ie inverting a class of maps) is equivalent to annihilating a subcategory of objects (the cones of those maps). Hence the traditional notation with a quotient ``$\Dqc(X)/\Dqc_Z(X)$" to mean localization. These ideas are completely standard nowadays (see~\cite{Neeman01}) and a good survey can be found in~\cite{Krause10}.

In short, the derived category of an open subscheme $U\subset X$ can be described by a purely triangular construction (localization) out of the derived category of~$X$.

The second general fact we want to remind the reader about is that the category $\Dqc(X)$ is \emph{compactly-generated}, in the following sense; see details in~\cite{Neeman01}.
\begin{Def}
\label{def:comp}%
Let $\cat S$ be a triangulated category admitting all small coproducts. An object $c\in\cat S$ is called \emph{compact} if any morphism from $c$ to a coproduct $\coprod_{i\in I}x_i$ factors via a finite sub-coproduct. The category $\cat S$ is called \emph{compactly-generated} if there exists a \emph{set} of compact objects $\cat G\subset \cat S$ such that for every $x\in \cat S$, the property $\Homcat{S}(g,x)=0$ for all~$g\in\cat G$ forces $x=0$. In this case, the subcategory $\cat S^c$ of compact objects is an essentially small thick triangulated subcategory of~$\cat S$, which is exactly $\cat S^c=\thick(\cat G)$ the thick envelope of~$\cat G$. Also, the smallest localizing subcategory of~$\cat S$ which contains $\cat S^c$ is the whole~$\cat S$. (A triangulated subcategory $\cat L\subset \cat S$ is localizing if it is closed under coproducts.) Hence the name ``compactly-generated" and the formula: $\Loc(\cat S^c)=\cat S$.
\end{Def}

In the geometric example of $\Dqc(X)$, the compact objects are exactly the perfect complexes $\Dqc(X)^c=\Dperf(X)$, that is, those complexes which are locally quasi-isomorphic to bounded complexes of vector bundles. These results were first established by Neeman~\cite{Neeman96} for $X$ separated, and in the above generality by Bondal and van den Bergh~\cite[\S\,3]{BondalVandenbergh03}.

Compactly generated triangulated categories form an important class of ``big" triangulated categories, way beyond algebraic geometry. Their most remarkable properties are the Brown representability theorem and its dual~\cite[Chap.\,8]{Neeman01} and \cite[\S\,2]{Krause02}, which insure that every (co)homological functor which maps (co)products to products is (co)representable. This beautiful theory has its roots in topology but branched out to many other settings, including motivic homotopy theory or various equivariant stable homotopy theories. In algebraic geometry, Neeman applied Brown representability to Grothendieck duality, see~\cite{Neeman96}.

\goodbreak\smallbreak

Let us now recall the interaction between the two facts recorded above.

In the landmark paper~\cite{ThomasonTrobaugh90}, Thomason proved the following result, with some ectoplasmic help from Trobaugh.  Let $U\subset X$ be a quasi-compact open subscheme and $Z=X-U$ its closed complement. Then the subcategory of compact objects~$\Dperf(U)$ is the \emph{idempotent completion} $(-)^\natural$ of the Verdier localization $\Dperf(X)/\Dperf_Z(X)$ of the ambient~$\Dperf(X)$ by the thick subcategory $\Dperf_Z(X)$ supported outside~$U$. We know that $\Dperf(U)$ cannot be a localization of $\Dperf(X)$ in general, because the map on Grothendieck groups $K_0(X)\to K_0(U)$ is not always surjective. Thomason's breakthrough was to understand that this is the only obstruction. In summary, on the ``big" categories, we have the natural localization but on the compact parts we need an idempotent completion\,:
$$
\Dqc(U)=\frac{\Dqc(X)}{\Dqc_{Z}(X)}
\quadtext{whereas}
\Dperf(U)=\Bigg(\frac{\Dperf(X)}{\Dperf_Z(X)}\Bigg)^\natural.
$$
This interplay between Bousfield localization and compact-generation was then isolated by Neeman in the following abstract result (which recovers the above by plugging $\cat S=\Dqc(X)$ and $\cat R^c=\Dperf_Z(X)$)\,:

\begin{Thm}[{Neeman-Thomason Localization Theorem~\cite[Thm.\,2.1]{Neeman92b}}]
\label{thm:NT}%
Let $\cat S$ be a compactly-generated triangulated category. Let $\cat R^c\subset \cat S^c$ be a thick subcategory of compact objects and $\cat R=\Loc(\cat R^c)$ the localizing subcategory it generates. Then the smashing Bousfield localization $\cat T=\cat S/\cat R$ remains compactly-generated and its subcategory of compact objects $\cat T^c$ is canonically the idempotent completion of the corresponding Verdier localization on compacts\,: $(\cat S^c/\cat R^c)^\natural\otoo{\simeq}\cat T^c$.
\end{Thm}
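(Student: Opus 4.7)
\medbreak
\textbf{Proof plan.} My approach divides into three stages, the third being the substantive one.

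\emph{Stage 1: smashing Bousfield localization.} Since $\cat{R}$ is the localizing subcategory generated by a \emph{set} of compact objects (the essentially small category $\cat{R}^c$), I would invoke Brown representability in $\cat{S}$ to produce a right adjoint $j_*$ to the Verdier quotient $j^*\colon \cat{S}\to \cat{S}/\cat{R}=\cat{T}$. Compactness of the generators of $\cat{R}$ then forces $j_*$ to preserve coproducts, which is exactly the smashing property for $L=j_*j^*$.

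\emph{Stage 2: compact generation of $\cat{T}$.} Because $j_*$ commutes with coproducts, $j^*(\cat{S}^c)\subseteq \cat{T}^c$. To see this subset generates, take $t\in\cat{T}$ with $\Homcat{T}(j^*g,t)=0$ for every compact $g\in\cat{S}^c$; adjunction rewrites this as $\Homcat{S}(g,j_*t)=0$, forcing $j_*t=0$ by compact generation of $\cat{S}$, hence $t\cong j^*j_*t=0$ by full faithfulness of $j_*$.

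\emph{Stage 3: identification of compact objects.} The quotient $j^*$ restricts to a triangulated functor $F\colon \cat{S}^c/\cat{R}^c\to \cat{T}^c$, and I would show (i) that its image thick-generates $\cat{T}^c$ and (ii) that $F$ is fully faithful. Claim~(i) is automatic from Stage~2: in any compactly generated category, the compact objects coincide with the thick envelope of any compact-generating set, so $\cat{T}^c=\thick(j^*(\cat{S}^c))=\thick(F(\cat{S}^c/\cat{R}^c))$. Given (i) and (ii), and the fact that $\cat{T}^c$ is idempotent complete (a standard consequence of Brown representability in~$\cat{T}$), the universal property of $(-)^\natural$ upgrades $F$ to the required equivalence $(\cat{S}^c/\cat{R}^c)^\natural \isoto \cat{T}^c$.

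\emph{Main obstacle.} The substantive step is fully faithfulness~(ii). For $a,c\in \cat{S}^c$, a morphism $j^*a\to j^*c$ in $\cat{T}$ is represented by a roof $a\xleftarrow{s} b\to c$ in $\cat{S}$ with $\cone(s)\in\cat{R}$; one must replace this by a roof with $b\in\cat{S}^c$ and $\cone(s)\in\cat{R}^c$, and analogously control when a morphism of $\cat{S}^c/\cat{R}^c$ becomes zero in $\cat{T}$. The key tool is the Neeman dévissage lemma: every compact object of $\cat{R}=\Loc(\cat{R}^c)$ is a direct summand of an object of $\cat{R}^c$; since $\cat{R}^c$ is already thick (hence summand-closed) in $\cat{S}^c$, this yields $\cat{R}\cap \cat{S}^c=\cat{R}^c$. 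Combining this transfer of compactness with a calculus-of-fractions argument moves the roof into the compact world and closes out the proof.
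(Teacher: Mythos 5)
Your three-stage architecture is the standard one and matches what the paper does: Stages~1 and~2 are essentially the content of Lemma~\ref{lem:comp-gen} (the paper itself only records Theorem~\ref{thm:NT} with a citation to Neeman and sketches the derivation in Remark~\ref{rem:NT}). The problem is in Stage~3, in your identification of the ``key tool''. The statement you name --- that every compact object of $\cat R=\Loc(\cat R^c)$ is a direct summand of an object of $\cat R^c$, whence $\cat R\cap\cat S^c=\cat R^c$ --- is true, but it cannot ``move the roof into the compact world'', because it only concerns objects of $\cat R$ that are \emph{already} compact. In a roof $a\xleftarrow{s}b\to c$ representing a morphism $j^*a\to j^*c$ with $a,c\in\cat S^c$, the apex $b$ and the cone $\cone(s)\in\cat R$ are arbitrary, non-compact objects of $\cat S$, so knowing $\cat R\cap\cat S^c=\cat R^c$ gives no purchase on them; the calculus-of-fractions step you gesture at would stall here.

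The ingredient actually required is the one the paper's Remark~\ref{rem:NT} explicitly points to, namely~\cite[Thm.\,4.3.3]{Neeman01}: \emph{every morphism from a compact object of $\cat S$ to an object of $\Loc(\cat R^c)$ factors through an object of $\cat R^c$}. Applied to $a\to\cone(s)$ this yields a factorization $a\to r\to\cone(s)$ with $r\in\cat R^c$; letting $b'$ be the fibre of $a\to r$ produces a roof $a\leftarrow b'\to c$ with $b'\in\cat S^c$ and cone in $\cat R^c$ mapping to the original one, which gives fullness, and faithfulness follows by applying the same lemma to the factorization through an object of $\cat R$ of a map killed in $\cat T$. This factorization lemma is the real content of the theorem (its proof is a nontrivial homotopy-colimit d\'evissage), and the identity $\cat R\cap\cat S^c=\cat R^c$ that you cite is itself ordinarily deduced from it rather than the other way around. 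So the plan is salvageable by substituting the correct lemma, but as written the decisive step of Stage~3 is not justified.
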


We want to extend this theory from Bousfield localizations in the Zariski topology to separable extensions in the \'etale topology.

%------------------------------------------------------------------------------
\medbreak
\section{Separable extensions and the \'etale topology}
\label{se:towards}%
\medbreak
%------------------------------------------------------------------------------

The Neeman-Thomason theory recalled in Section~\ref{se:NT} works well for smashing Bousfield localization of compactly-generated triangulated categories. Its incarnation in algebraic geometry covers restriction to Zariski open subschemes.

However, smashing Bousfield localization has its limits. In algebraic geometry, it can essentially describe nothing else than Zariski localization. Furthermore, if we work with equivariant triangulated categories, Bousfield localization does not produce restriction to subgroups. For instance, in modular representation theory, if $\cat S=\Stab(\kk G)$ is the stable module category of a finite group~$G$ over a field~$\kk$, there is no way to obtain the stable category $\cat T=\Stab(\kk H)$ of a subgroup $H\leq G$ by means of a localization~$\cat S/\cat R$ (except of course in the trivial cases where $\cat T=\cat S$ or~$\cat T=0$). And similarly for equivariant stable homotopy categories\,: Restriction $\SH(G)\to \SH(H)$ is never a localization. A solution to this problem has been one of the first nice applications of separable extensions of triangulated categories, in the sense of~\cite{Balmer11}. Indeed, we first proved in~\cite{Balmer12bpp} that restrictions to subgroups are separable extensions in modular representation theory. Then, in the recent~\cite{BalmerDellAmbrogioSanders14pp}, we proved similar results in many other equivariant settings, including equivariant stable homotopy theory~$\SH(G)$, or Kasparov's equivariant $KK$-theory of C*-algebras, or equivariant derived categories of schemes. Here, we want to include \'etale morphisms of schemes to this list of examples.

It is high time we recall those \emph{separable extensions of triangulated categories}.

\begin{Def}
\label{def:sep}%
Let $\cat S$ be a triangulated category (in the precise sense of~\cite{Maltsiniotis06,Kuenzer07,Balmer11}, which covers all stable homotopy categories in Nature). Consider a \emph{monad} $\bbA:\cat S\to \cat S$, that is, an endofunctor equipped with multiplication $\mu:\bbA\circ\bbA\to \bbA$ and two-sided unit $\eta:\Idcat{S}\to \bbA$ satisfying the usual rules of associative and unital ring multiplication ($\mu\circ\bbA\mu=\mu\circ\mu\bbA$ and $\id=\mu\circ\bbA\eta=\mu\circ\eta\bbA$). This is a classical notion, see~\cite[Chap.\,VI]{MacLane98}. Also recall that the monad~$\bbA$ is called \emph{separable} if it satisfies the analogue of the commutative algebra definition of separability, namely if there exists a section $\sigma:\bbA\to \bbA^2$ of multiplication ($\mu\circ\sigma=\id$) which is $\bbA$-linear on both sides ($\bbA\mu\circ\sigma\bbA=\sigma\mu=\mu\bbA\circ\bbA\sigma$). When $\cat S$ is triangulated and $\bbA$ is exact, one also requires $\sigma:\bbA\to \bbA^2$ to be compatible with suspension.
\end{Def}

\begin{Exa}
\label{exa:sep-Bous}%
Suppose that the multiplication $\mu:\bbA^2\to \bbA$ is an isomorphism. Then $\bbA$ is separable, with $\sigma=\mu\inv$. This is admittedly the ``trivial" case of separability. And yet it is already an interesting case, since it is easy to see that $\mu$ is an isomorphism if and only if $\bbA\eta$ and $\eta\bbA$ are equal isomorphisms (both equal to~$\mu\inv$). This means that $\Idcat{S}\oto{\eta}\bbA$ is a Bousfield localization functor (and $\mu$ is forced to be $(\bbA\eta)\inv$). So, Bousfield localization functors are special cases of separable monads.
\end{Exa}

\begin{Rem}
\label{rem:AMS}%
The main result of~\cite{Balmer11} is that, under the assumption that $\bbA$ is exact and separable, the category of $\bbA$-modules in~$\cat S$ remains triangulated. Let us explain this statement. Thinking of a monad as a functorial version of a ring in~$\cat S$, one defines an \emph{$\bbA$-module in~$\cat S$} as an object $x\in \cat S$ equipped with a morphism $\rho:\bbA x\to x$ such that $\rho\circ\eta_x=\id_x$ and $\rho\circ\bbA\rho=\rho\circ\mu_x$. These axioms express the usual rules for modules: $1m=m$ and $a(bm)=(ab)m$ respectively. In the same vein, a \emph{morphism} $(x,\rho)\to (x',\rho')$ of $\bbA$-modules in~$\cat S$ is simply a morphism $f:x\to x'$ in~$\cat S$ such that $\rho'\circ\bbA f=f\circ\rho$. This yields the category $\AMS$ of $\bbA$-modules in~$\cat S$. The resulting Eilenberg-Moore adjunction has been around for half-a-century~\cite{EilenbergMoore65}\,:
\begin{equation}
\label{eq:FU}%
\vcenter{\xymatrix@R=1.6em{\cat S \ar@<-.25em>[d]_-{F_\bbA} \\ \AMS  \ar@<-.25em>[u]_-{U_\bbA}}}
\end{equation}
The \emph{extension-of-scalars} functor $F_\bbA$ maps $x\in\cat S$ to the \emph{free $\bbA$-module} $(\bbA x,\mu_x)$. Its right adjoint $U_\bbA$ forgets the $\bbA$-action $(x,\varrho)\mapsto x$. The theorem of~\cite{Balmer11} says that when $\bbA$ is exact and separable, $\AMS$ admits a unique triangulation which makes $F_\bbA$ and $U_\bbA$ exact. When $\eta:\Id\to \bbA$ is a Bousfield localization functor (see Example~\ref{exa:sep-Bous}), this reproves the well-known fact that the Bousfield localization $\AMS\cong\SET{x\in \cat S}{x\textrm{ is $\bbA$-local, \ie }\eta_x\textrm{ isom.}}\cong\cat S/\Ker(L)$ is triangulated. In that case, the functor $U_\bbA:\AMS\to \cat S$ is fully faithful. For a general separable extension as in~\eqref{eq:FU}, the functor $U_\bbA$ is only faithful, \ie $F_A$ is surjective up to direct summands, \ie every $\bbA$-module~$x$ is a direct summand of a free one~$x\leq F_\bbA U_\bbA(x)$.
\end{Rem}

We are now ready to state our first result, which extends~\cite[Cor.\,6.6]{Balmer11} beyond the affine and finite case. Recall that a scheme morphism $f:V\to X$ is \emph{separated} if the diagonal $\Delta_f:V\to V\times_X V$ is a closed immersion. This is a rather weak condition on a morphism which should not be confused with the separability of Definition~\ref{def:sep}. As we shall see the latter is actually more strongly related to the fact that $f:V\to X$ is \emph{unramified}.

\begin{Thm}
\label{thm:Dqc}%
Let $f:V\to X$ be a separated \'etale morphism of quasi-compact and quasi-separated schemes. Then the monad $\bbA_f:=\Rf_*\circ f^*$ on~$\Dqc(X)$ is exact and separable (Def.\,\ref{def:sep}) and there is a unique equivalence of triangulated categories
\begin{equation}
\label{eq:Dqc}%
E:\,\Dqc(V)\,\isotoo \,\Af\MModcat{\Dqc(X)}
\end{equation}
between the derived category of~$V$ and the category of $\Af$-modules in the derived category of~$X$, under which $f^*:\Dqc(X)\to \Dqc(V)$ becomes isomorphic to extension-of-scalars $F_{\Af}$ along~$\Af$ and $\Rf_*:\Dqc(V)\to \Dqc(X)$ becomes isomorphic to~$U_{\bbA_f}$; see~\eqref{eq:FU}. Also, the monad $\Af$ commutes with arbitrary coproducts.
\end{Thm}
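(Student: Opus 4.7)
There are three assertions to prove: (i) $\bbA_f$ is exact and preserves arbitrary coproducts, (ii) $\bbA_f$ is separable, and (iii) the canonical comparison $E\colon\Dqc(V)\to\Af\MModcat{\Dqc(X)}$ is a triangulated equivalence identifying $f^*$ with $F_{\bbA_f}$ and $\Rf_*$ with $U_{\bbA_f}$. I treat them in that order; (i) is nearly formal, (ii) is a clean geometric argument, and (iii) carries the technical weight.

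For (i), $f^*$ is exact (as $f$ is flat, being \'etale) and coproduct-preserving (as a left adjoint), while $\Rf_*$ is triangulated by construction and preserves arbitrary coproducts for $f$ quasi-compact and quasi-separated, by standard results (\cite{Neeman01}, \cite{BoekstedtNeeman93}). Composition yields (i).

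For (ii), the crucial input is that separatedness and \'etaleness of $f$ together force $\Delta_f\colon V\to V\times_X V$ to be both a closed immersion and \'etale, hence a clopen immersion; write $V\times_X V=V\sqcup V'$ with $\Delta_f(V)$ the first summand. Flat base change along the cartesian square with projections $p_1,p_2\colon V\times_X V\rightrightarrows V$ gives $f^*\Rf_*\cong\rmR(p_1)_*p_2^*$, and hence $\bbA_f^2\cong\rmR(fp_1)_*(fp_2)^*$ splits canonically as $\bbA_f\oplus\bbA_{f'}$ with $f':=f\restr{V'}$. Under this splitting the multiplication $\mu$ is the projection onto the first summand (because $\Delta_f$ sections both $p_i$), so the inclusion of that summand defines a natural section $\sigma$. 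Two-sided $\bbA_f$-linearity of $\sigma$ then follows by a parallel flat-base-change computation on the triple fibre product $V\times_X V\times_X V$, exploiting the canonicity of its small diagonal strata.

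For (iii), the plan is to show that the comparison $E$ — which sends $y\in\Dqc(V)$ to $\Rf_*y$ with its tautological $\bbA_f$-action $\Rf_*(\epsilon_y)$ and is triangulated in the unique triangulation on $\Af\MModcat{\Dqc(X)}$ supplied by~\cite{Balmer11} — is an equivalence; the stated identifications and uniqueness then come automatically from the Eilenberg-Moore universal property. The three ingredients I would combine are: conservativity of $\Rf_*$, verified by factoring $f=j\circ g$ with $g\colon V\to U:=f(V)$ surjective \'etale onto the (automatically open) image and $j\colon U\hook X$ the open immersion, since both factors have conservative derived pushforward ($j^*j_*=\Id$, and for $g$ a faithfully flat descent argument); the adjunction isomorphism $\Hom_{\Dqc(V)}(f^*x,f^*y)\cong\Hom_{\Dqc(X)}(x,\Rf_*f^*y)$, which makes $E$ fully faithful on free modules; and the separability from~(ii), which via Remark~\ref{rem:AMS} forces every $\bbA_f$-module to be a direct summand of a free one $F_{\bbA_f}(x)=\Rf_*f^*x$, so that splitting idempotents extends fully faithfulness to arbitrary modules and yields essential surjectivity. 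The main obstacle will be organizing this monadicity argument rigorously in the triangulated setting, where general coequalizers do not exist; separability of $\bbA_f$ is precisely the input that lets splitting idempotents play the role of those missing coequalizers. An alternative route would be to reduce to the affine \'etale case~\cite[Cor.\,6.6]{Balmer11} by a Zariski descent argument on~$X$, which would hinge on the same separability and coproduct-preservation established in~(ii) and~(i).
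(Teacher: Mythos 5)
Your geometric core is the same as the paper's: both arguments hinge on the fact that for $f$ separated and \'etale the diagonal $\Delta_f$ is a clopen immersion, so $V\times_X V\cong V\sqcup V'$, and flat base change then identifies $f^*\Rf_*$ with $\rmR(\pr_1)_*\pr_2^*\cong \Id\oplus(\text{a complementary summand indexed by }V')$, with the counit $\eps\colon f^*\Rf_*\to\Id_{\Dqc(V)}$ appearing as the projection onto the $\Id$-summand. The difference is in the categorical packaging. The paper extracts from this computation one single statement --- $\eps$ admits a \emph{natural section} $\xi\colon\Id\to f^*\Rf_*$ --- and feeds it into \cite[Lemma~2.10]{Balmer12bpp}, which then yields both the separability of $\bbA_f$ and the fact that the Eilenberg--Moore comparison $E$ is an equivalence in one stroke; in particular the two-sided $\bbA_f$-linearity of $\sigma$, which you propose to verify by hand on the triple fibre product, comes for free, and the identifications $E\circ f^*=F_{\bbA_f}$, $U_{\bbA_f}\circ E=\Rf_*$ together with the uniqueness of $E$ are formal, as you say (\cite[Thm.~VI.3.1]{MacLane98}).

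The genuine soft spot is in your step (iii), where you substitute conservativity of $\Rf_*$ for the split counit. First, the proposed proof of conservativity of $\rmR g_*$ for $g$ surjective \'etale by ``faithfully flat descent'' does not work as stated: descent controls the pullback $g^*$ (which is indeed conservative for $g$ faithfully flat), not the pushforward $\rmR g_*$; the natural way to see that $\rmR g_*$ is conservative in this situation is precisely to exhibit $\Id$ as a natural retract of $g^*\rmR g_*$, i.e.\ to split the counit, so this detour is circular at best. Second, and more importantly, conservativity of the right adjoint is not by itself enough to make the comparison functor $E$ fully faithful off the free objects, let alone an equivalence --- that is the whole content of monadicity theorems. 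Full faithfulness of $E$ on arbitrary $y,y'\in\Dqc(V)$ requires exhibiting $y$ and $y'$ as natural retracts of objects $f^*x$ in the image of $f^*$, and that is again exactly the natural splitting of $\eps$; separability of $\bbA_f$ only handles the target side (every module is a retract of a free one, giving essential surjectivity once idempotents are matched). The repair is immediate: your own base-change computation in (ii) already produces the section $\xi$ (the inclusion of the $\Id$-summand of $f^*\Rf_*$), and carrying it into step (iii) --- or simply quoting \cite[Lemma~2.10]{Balmer12bpp}, which is what the paper does --- closes the gap and simultaneously renders your separate verification of separability unnecessary.
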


\begin{proof}
Details about the derived categories $\Dqc(X)$ and the adjunction $f^*\adj \Rf_*$ can be found in Lipman~\cite{Lipman09}. It is a classical fact of adjunctions~\cite[Thm.\,VI.3.1]{MacLane98} that the $(f^*,\Rf_*)$-adjunction compares to the Eilenberg-Moore adjunction~\eqref{eq:FU} for the associated monad~$\bbA_f=\Rf_*\circ f^*$ on~$\Dqc(X)$, via a unique functor~$E$
$$
\xymatrix@C=1em{
& \Dqc(X) \ar@<-.2em>[ld]_-{f^*} \ar@<-.2em>[rd]_-{F_{\bbA_f}}
\\
\Dqc(V) \ar@<-.2em>[ru]_-{\Rf_*} \ar[rr]_-{E}
&& \bbA_f\MMod_{\Dqc(X)} \ar@<-.2em>[lu]_-{U_{\bbA_f}}
}
$$
which is a morphism of adjunctions, meaning that $E\circ f^*=F_{\bbA_f}$ and $U_{\bbA_f}\circ E=\Rf_*$. Explicitly, $E$ is given by the formula
\begin{equation}
\label{eq:E}%
E(y)=\big(\Rf_*(y)\,,\, \Rf_*(\eps_y)\big)
\end{equation}
for all~$y\in\Dqc(V)$ where $\eps_y:f^*\Rf_*(y)\to y$ is the counit of the $(f^*,\Rf_*)$-adjunction. Note that $\Rf_*(\eps_y)$ is a morphism $\bbA_f(\Rf_*(y))=\Rf_*f^*\Rf_*(y)\otoo{\Rf_*(\eps)}\Rf_*(y)$ which indeed describes an $\bbA_f$-action on the object $\Rf_*(y)$ of~$\Dqc(X)$.

In this situation, by~\cite[Lemma~2.10]{Balmer12bpp}, we can prove simultaneously that $\bbA_f$ is separable and that $E$ is an equivalence by showing that the counit $\eps:f^*\Rf_*\to \Id_{\Dqc(V)}$ is a split epimorphism, that is, there exists a natural transformation $\xi:\Id_{\Dqc(V)}\to f^*\Rf_*$ such that $\eps\circ\xi=\id$. Let us do that here.

Since $f$ is \'etale, it is unramified. By EGA~\cite[IV.17.4.2, p.\,65]{EGA4}, this implies that the diagonal $\Delta_f:V\to V\times_X V$ is an open immersion. On the other hand, since we assume $f$ separated, $\Delta_f$ is also closed, hence $\Delta_f:V\hook V\times_X V$ is an isomorphism onto a closed and open subscheme. In other words, there is a disconnected decomposition $V\times_X V\simeq V\sqcup Z$ for some scheme~$Z$, in such a way that $\Delta_f:V\to V\times_X V$ is the inclusion of~$V$ in~$V\sqcup Z$. Since the two projections $\pr_i:V\times_X V\to V$ satisfy $\pr_i\circ\Delta_f=\id_V$, we have the following cartesian diagram\,:
$$
\xymatrix@=5em@R=3em{
V\times_X V \cong V\sqcup Z \ar[d]|-{\kern1em \pr_1=(\id\ p_1)} \ar[r]^-{\pr_2=}_-{(\id\ p_2)}
& V \ar[d]^f
\\
V \ar[r]^-{f}
& X
}
$$
for some morphisms $p_1,p_2:Z\to V$. We want to use flat base-change on this square. Note that since $f$ is \'etale, it is also flat. Since $X$ and $V$ are quasi-compact and quasi-separated (``concentrated" in Lipman's terminology), then so is~$f$ by~\cite[Cor.\,I.6.1.10]{EGA1}. Hence we can apply base-change~\cite[Prop.\,3.9.5]{Lipman09}, which gives us a natural isomorphism $\theta:f^*\Rf_*\isotoo\RR{\pr_1}_*\,{\pr_2}^*$ defined as follows\,:
$$
\theta:\ f^*\Rf_*\otoo{\eta_2}f^*\Rf_*\,\RR{\pr_2}_*\,{\pr_2}^*\cong f^*\Rf_*\,\RR{\pr_1}_*\,{\pr_2}^*\otoo{\eps}\RR{\pr_1}_*\,{\pr_2}^*
$$
where $\eta_2$ is the unit of the $({\pr_2}^*,\RR{\pr_2}_*)$-adjunction (on which one applies $f^*\Rf_*$), where $\eps$ is our counit (applied to $\RR{\pr_1}_*\,{\pr_2}^*$) and where the middle canonical isomorphism comes from~$f\pr_1=f\pr_2$. On the other hand, we have in our case that ${\RR\pr_2}_*\,{\pr_2}^*\cong\RR(\id\ p_2)_*\,(\id\ p_2)^*\cong\Id\oplus {\RR p_2}_*\,{p_2}^*$ and similarly ${\RR\pr_1}_*\,{\pr_2}^*\cong\Id\oplus {\RR p_1}_*\,{p_2}^*$. Under this decomposition $\eta_2$ becomes~$\smallmatrice{\id\\\star}$ (where $\star$ is the unit of the $({p_2}^*,\RR {p_2}_*)$-adjunction). So, the above isomorphism~$\theta$ is the following composition
$$
f^*\Rf_*\otoo{\smallmatrice{\id\\\star}}
f^*\Rf_*\oplus f^*\Rf_*\,{\RR p_2}_*\,{p_2}^*\
\otoo{\smallmatrice{\id&0\\0&\cong}} f^*\Rf_*\oplus f^*\Rf_*\,{\RR p_1}_*\,{p_2}^*\
\otoo{\smallmatrice{\eps&0\\0&\eps}}\ \Id\oplus {\RR p_1}_*\,{p_2}^*\!.
$$
The fact that this composite is an isomorphism proves that $\eps$ (in the upper-left corner of the last map) is a split epimorphism, as wanted.

Finally, both $f^*$ and $\Rf_*$ preserve coproducts (for the latter, one can use that its left adjoint $f^*$ preserves compacts). Hence so does~$\bbA_f=\Rf_*\circ f^*$.
\end{proof}

Note that the above result implies that $\Rf_*$ is faithful, as is every~$U_{\bbA}$. However it is not \emph{fully} faithful in general, unlike what happens for $j_*:\Dqc(U)\to \Dqc(X)$ in the case of a Zariski open immersion $j:U\hook X$.

\begin{Rem}
\label{rem:monoidal}%
The adjunction $f^*:\Dqc(X)\adjto \Dqc(V):\Rf_*$ satisfies the projection formula $\Rf_*(y)\otimes x\isoto \Rf_*(y\otimes f^*(x))$ for all $y\in\Dqc(V)$ and $x\in\Dqc(X)$; see~\cite[3.9.4]{Lipman09}. Here $\otimes$ denotes the respective left derived functors of~$\otimes_{\cO_X}$ and~$\otimes_{\cO_V}$. This formula implies that the monad $\bbA_f$ is actually isomorphic to the monad $\Rf_*(\cO_V)\otimes-$ induced by the ring object $A_f:=\Rf_*(\cO_V)$ in~$\Dqc(X)$, obtained by evaluating the monad at the $\otimes$-unit. See details in~\cite[Lemma~2.7]{BalmerDellAmbrogioSanders14pp}. One can therefore rephrase Theorem~\ref{thm:Dqc} with modules over the ring object~$A_f=\bbA_f(\unit)$ instead of the monad~$\bbA_f$. This is left to the interested reader.

Echoing~\cite[Question~4.7]{Balmer12bpp}, one may wonder if the above $A_f$ are essentially the only possible commutative separable monoids (\aka ``tt-rings") in $\Dqc(X)$.
\end{Rem}

We have chosen to follow a tensor-free treatment to avoid overloading the discussion. There are two reasons for this choice. First, even in the Zariski case one often considers $j^*$ and $j_*$ instead of the ring object~$j_*\cO_U$. But more importantly, the results of this section are a (further) motivation to study separable extensions of triangulated categories, be they tensor triangulated categories or not. For instance, the generalized Neeman-Thomason theorem of the next section will hold for general monads, not only for ring objects. In particular, it does not require a tensor structure. Summarizing the examples of separable extensions of triangulated categories, we know at this stage that they include Bousfield localizations, \'etale morphisms in algebraic geometry and, thanks to~\cite{BalmerDellAmbrogioSanders14pp}, restriction to subgroups in a broad variety of equivariant stable homotopy categories.

%------------------------------------------------------------------------------
\medbreak
\section{The Neeman-Thomason Theorem for separable extensions}
\label{se:NT-sep}%
\medbreak
%------------------------------------------------------------------------------

The broad array of examples summarized at the end of the previous section invites us to study separable extensions of triangulated categories for themselves. In this section, we prove a generalization of the Neeman-Thomason Theorem~\ref{thm:NT} in that setting. Let us start with the only ``new" definition (of sorts) of the paper:

\begin{Def}
\label{def:smash}%
A monad $\bbA:\cat S\to \cat S$ is \emph{smashing} if it commutes with coproducts.
\end{Def}

In view of Example~\ref{exa:sep-Bous}, this notion extends the usual notion of smashing Bousfield localization functor. The localization associated to a subcategory $\cat R=\Loc(\cat R^c)$ as in the Neeman-Thomason Theorem~\ref{thm:NT} is smashing. This classical result is attributed to Ravenel, Adams and Bousfield in Neeman~\cite{Neeman92b}, although we follow~\cite{HoveyPalmieriStrickland97} who refer to Miller~\cite{Miller92}. In algebraic geometry, we have seen in Theorem~\ref{thm:Dqc} that the monad~$\bbA_f$ which describes an \'etale extension is also smashing. Definition~\ref{def:smash} is clearly the right one for our purposes:
\begin{Thm}
\label{thm:main}%
Let $\cat S$ be a compactly-generated triangulated category (Def.\,\ref{def:comp}) and let $\bbA:\cat S\to \cat S$ be a separable exact monad (Def.\,\ref{def:sep}) which is smashing in the above sense. Then the triangulated category $\cat T=\AMS$ of $\bbA$-modules in~$\cat S$ (Rem.\,\ref{rem:AMS}) is also compactly-generated and the subcategory of compact objects $\cat T^c$ coincides with the thick subcategory generated by the image of~$\cat S^c$ under extension-of-scalars~$F_\bbA:\cat S\to \cat T$ along~$\bbA$\,:
\begin{equation}
\label{eq:main}%
\cat T^c=\thick(F_\bbA(\cat S^c))\,.
\end{equation}
If moreover, $\bbA$ preserves compacts, \ie $\bbA(\cat S^c)\subseteq \cat S^c$, then we have an equality
$$
(\AMS)^c = \bbA\MMod_{\cat S^c}
$$
of subcategories of~$\cat T=\AMS$. In that case, every compact object of~$\cat T$ is a direct summand of the image $F_\bbA(c)$ of a compact object $c\in \cat S^c$.
\end{Thm}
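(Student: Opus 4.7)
The approach is to unravel everything through the Eilenberg--Moore adjunction $F_\bbA\adj U_\bbA$. First I would observe that, because $\bbA$ is smashing, coproducts in $\cat T=\AMS$ exist and are created by the forgetful functor $U_\bbA$: given $(x_i,\rho_i)\in\cat T$, the smashing isomorphism $\bbA(\coprod_i x_i)\cong\coprod_i \bbA x_i$ turns $\coprod_i x_i$ into an $\bbA$-module via $\coprod_i\rho_i$, visibly yielding the coproduct in~$\cat T$. In particular $U_\bbA$ preserves coproducts, whence its left adjoint $F_\bbA$ preserves compact objects and $F_\bbA(\cat S^c)\subseteq\cat T^c$.

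Next, pick a set $\cat G\subset\cat S^c$ of compact generators of~$\cat S$ and show that $F_\bbA(\cat G)$ compactly generates~$\cat T$. Compactness has just been established; for generation, adjunction gives
\[
\Homcat{T}(F_\bbA g,(x,\rho))=\Homcat{S}(g,x),
\]
so vanishing on all shifts of $g\in\cat G$ forces $x=0$, hence $(x,\rho)=0$. Therefore $\cat T$ is compactly generated by $F_\bbA(\cat G)$, and the standard description recalled in Definition~\ref{def:comp} yields $\cat T^c=\thick(F_\bbA(\cat G))$. Since $\cat S^c=\thick(\cat G)$ and $F_\bbA$ is exact, this refines to $\cat T^c=\thick(F_\bbA(\cat S^c))$, namely~\eqref{eq:main}.

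For the second statement, assume further $\bbA(\cat S^c)\subseteq\cat S^c$. Applying the exact functor $U_\bbA$ to~\eqref{eq:main} and using $U_\bbA F_\bbA=\bbA$ gives
\[
U_\bbA(\cat T^c)\subseteq \thick(U_\bbA F_\bbA(\cat S^c))=\thick(\bbA(\cat S^c))\subseteq \cat S^c,
\]
which is the inclusion $(\AMS)^c\subseteq \bbA\MMod_{\cat S^c}$. Conversely, given $(x,\rho)$ with $x\in\cat S^c$, the separability of~$\bbA$ presents $(x,\rho)$ as a direct summand of the free module $F_\bbA(x)=(\bbA x,\mu_x)$ (Remark~\ref{rem:AMS}); since $F_\bbA$ preserves compactness, $F_\bbA(x)\in\cat T^c$, and so does its retract~$(x,\rho)$. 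The final assertion follows at once: every $c\in\cat T^c$ has $U_\bbA c\in\cat S^c$ and is a direct summand of $F_\bbA(U_\bbA c)$.

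The substantive point of this plan is the first paragraph, where the smashing hypothesis is indispensable: without it, $U_\bbA$ need not preserve coproducts, $F_\bbA$ need not preserve compactness, and the adjunction-based translation between $\cat S$ and $\cat T$ breaks down. Everything else is a formal consequence of the adjunction identities, the behavior of thick closures under exact functors, and the key structural fact from Remark~\ref{rem:AMS} that every $\bbA$-module is a retract of a free one.
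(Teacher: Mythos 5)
Your proof is correct and follows essentially the same route as the paper: the paper packages your first two paragraphs into Lemma~\ref{lem:comp-gen} (citing Neeman for the fact that a left adjoint of a coproduct-preserving right adjoint preserves compacts), and handles the ``moreover'' part by invoking that $\bbA\MMod_{\cat S^c}$ is the idempotent completion of the free modules $F_\bbA(\cat S^c)$, which amounts to the same two inclusions you establish directly via the adjunction identity $U_\bbA F_\bbA=\bbA$ and the retract property of Remark~\ref{rem:AMS}. No gaps.
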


\begin{Lem}[See~{\cite[Lem.\,8.2]{HallRydth14pp}}]
\label{lem:comp-gen}%
Let $F:\cat S\adjto \cat T:U$ be an adjunction of exact functors between triangulated categories with arbitrary coproducts and suppose that $\cat S$ is compactly generated. If the right adjoint $U$ preserves arbitrary coproducts, then $F$ preserves compact objects. If moreover $U$ is conservative (\ie $U(t)=0\then t=0$ for any object~$t\in \cat T$) then $\cat T$ is also compactly generated and $\cat T^c=\thick(F(\cat S^c))$.
\end{Lem}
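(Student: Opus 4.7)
The plan is to reduce the statement to two mechanical uses of the $(F,U)$-adjunction and then invoke one standard black-box result of Neeman. First I would verify that $F$ preserves compactness. Given $c\in\cat S^c$ and any family $(t_i)_{i\in I}$ in~$\cat T$, the adjunction isomorphism, the hypothesis that $U$ commutes with coproducts, and compactness of~$c$ combine to give
$$\Hom_{\cat T}(F(c),\coprod_i t_i)\cong \Hom_{\cat S}(c,\coprod_i U(t_i))\cong \coprod_i \Hom_{\cat S}(c,U(t_i))\cong \coprod_i \Hom_{\cat T}(F(c),t_i),$$
so $F(c)\in\cat T^c$. In particular, $\cat G:=F(\cat S^c)$ is a set (up to isomorphism, since $\cat S^c$ is essentially small by Definition~\ref{def:comp}) of compact objects of~$\cat T$.

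Next, assuming $U$ conservative, I would show that $\cat G$ generates~$\cat T$ in the sense of Definition~\ref{def:comp}. Suppose $t\in\cat T$ satisfies $\Hom_{\cat T}(F(c),t)=0$ for every $c\in\cat S^c$. By adjunction this reads $\Hom_{\cat S}(c,U(t))=0$ for every $c\in\cat S^c$, and compact generation of~$\cat S$ forces $U(t)=0$; conservativity of~$U$ then yields $t=0$. Hence $\cat T$ is compactly generated, with $\cat G$ as a distinguished set of compact generators.

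Finally I would invoke Neeman's standard characterization (\cite[Lem.~4.4.5 and Thm.~4.4.9]{Neeman01}) that, in a compactly generated triangulated category, the thick closure of any set of compact generators exhausts the full subcategory of compact objects. Applied to $\cat G=F(\cat S^c)$ this yields $\cat T^c=\thick(F(\cat S^c))$. The only step that is not routine bookkeeping is this last invocation, but there is nothing new to prove there: all the real content is packaged inside Neeman's theorem, and the present lemma is simply a convenient adaptation of it to the setting of an adjunction with conservative coproduct-preserving right adjoint.
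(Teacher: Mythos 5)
Your proof is correct and follows essentially the same route as the paper: the paper cites Neeman for the fact that a left adjoint of a coproduct-preserving right adjoint preserves compacts (you simply write out that one-line adjunction computation), and then both arguments use adjunction plus conservativity of $U$ to see that $F$ of a set of compact generators generates $\cat T$, concluding via the standard fact that $\cat T^c$ is the thick closure of any set of compact generators.
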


\begin{proof}
The first statement is~\cite[Thm.\,5.1]{Neeman96}. So, let us assume that $U$ is conservative and let $\cat G\subset \cat S^c$ be a set of compact generators. By the first part, the set $F(\cat G)$ consists of compact objects. Suppose now that $t\in \cat T$ is such that $\Homcat{T}(F(g),t)=0$ for all $g\in \cat G$; then $\Homcat{S}(g,U(t))=\Homcat{T}(F(g),t)=0$ by adjunction. Since $\cat G$ generates~$\cat S$, we have $U(t)=0$ and therefore $t=0$ since $U$ is conservative. Hence the set $F(\cat G)$ generates~$\cat T$ and therefore $\cat T^c=\thick F(\cat G)\subseteq \thick F(\cat S^c)\subseteq \cat T^c$.
\end{proof}

\begin{proof}[Proof of Theorem~\ref{thm:main}]
First observe that the category of $\bbA$-modules $\cat T=\AMS$ admits arbitrary coproducts in the obvious way\,: $\coprod_{i\in I}(x_i,\rho_i)=\big(\coprod_{i\in I}x_i\,,\,\coprod_{i\in I}\rho_i\big)$, using that $\bbA$ commutes with coproducts for $\coprod_{i}\rho_i$ to be an $\bbA$-action on~$\coprod_i x_i$. This is where we use that $\bbA$ is smashing. In particular, we see that $F_{\bbA}$ (unsurprisingly for a left adjoint) and $U_\bbA$ both preserve coproducts; for $U_\bbA$ it follows from the above explicit formula for coproducts in~$\cat T$. We thus fall in the general assumptions of Lemma~\ref{lem:comp-gen} and conclude that $\cat T$ is compactly generated and that $\cat T^c=\thick F_\bbA(\cat S^c)$.

Now suppose moreover that $\bbA$ preserves compact objects, so that $\bbA:\cat S^c\to \cat S^c$ is a separable exact monad on~$\cat S^c$. In particular, we can form the triangulated category $\bbA\MMod_{\cat S^c}$ of $\bbA$-modules in~$\cat S^c$, which is a full subcategory of~$\AMS=\cat T$ in the obvious canonical way. This subcategory $\bbA\MMod_{\cat S^c}$ is idempotent-complete as it is the idempotent completion of the category $F_\bbA(\cat S^c)$ of free $\bbA$-modules in~$\cat S^c$ (see~\cite[Thm.\,5.17]{Balmer11} again). It follows that $\bbA\MMod_{\cat S^c}=\thick F_\bbA(\cat S^c)=\cat T^c$ by the above discussion. Hence $\cat T^c=F_\bbA(\cat S^c)^\natural$ as claimed.
\end{proof}

\begin{Rem}
\label{rem:NT}%
Although we do not use Neeman's Theorem in our proof, we do use several of his results about compactly-generated categories. Hence our work does not supersede Neeman's but truly expands it to new territories. The separable statement~\eqref{eq:main} is slightly more complicated than the Neeman-Thomason counterpart but this is the price for increased generality. Actually, we can sketch a proof of Neeman's Theorem~\ref{thm:NT} from Theorem~\ref{thm:main} as follows. If $\cat R^c\subset \cat S^c$ then $\cat R=\Loc(\cat R^c)$ is a smashing subcategory of~$\cat S$ by Miller~\cite{Miller92} and family. Hence the associated Bousfield localization $L:\cat S\to \cat S$ commutes with coproducts, \ie $L$ is a smashing separable monad. By the above Theorem~\ref{thm:main}, $\cat T=L\MModcat{T}\cong\cat S/\cat R$ is compactly generated and its compact part is $\cat T^c=\thick(F_L(\cat S^c))$, where $F_L:\cat S\to \cat T$ is the quotient functor. Now one verifies that the induced functor $\cat S^c/\cat R^c\to \cat T^c$ is fully faithful by playing around with fractions and the fact~\cite[Thm.\,4.3.3]{Neeman01} that any morphism $c\to x$ in~$\cat S$ with $c$ compact and $x\in\cat R=\Loc(\cat R^c)$ must factor via an object of~$\cat R^c$. Combining the above results, $\cat S^c/\cat R^c\to \cat T^c$ is fully faithful and has thick image. This implies that $(\cat S^c/\cat R^c)^\natural\to (\cat T^c)^\natural=\cat T^c$ is an equivalence.
\end{Rem}

\begin{Rem}
Following-up on Remark~\ref{rem:monoidal}, we can consider the situation where the compactly-generated category~$\cat S$ is a \emph{tensor} triangulated category, such that $\otimes$ commutes with coproducts in each variable. The latter is automatic if $\cat S$ admits an internal hom functor $\cat S^\op\times \cat S\to \cat S$, right adjoint to the tensor. If $A\in\cat S$ is a separable ring object in~$\cat S$, then the associated monad $A\otimes-:\cat S\to \cat S$ is therefore automatically smashing in the sense of Definition~\ref{def:smash}. Hence the category $A\MModcat{S}$ is automatically compactly generated in that case. Assuming that $A\otimes-$ preserves compacts simply means that $A$ itself is compact, at least in the common case where the $\otimes$-unit~$\unit$ is compact. In that case, we get $(A\MModcat{S})^c=A\MMod_{\cat S^c}$.
\end{Rem}

\begin{Rems}
\label{rem:f^*-comp}%
Some comments on the interaction of Theorems~\ref{thm:Dqc} and~\ref{thm:main}.
\begin{enumerate}[(1)]
\item
Let $f:V\to X$ be a separated \'etale morphism. Then, by Theorems~\ref{thm:Dqc} and~\ref{thm:main}, $\Dperf(V)$ is the thick subcategory of $\Dqc(V)$ generated by $f^*(\Dperf(X))$. In particular, there exists a perfect complex $G$ over~$X$ such that $f^*(G)$ is a perfect generator of~$\Dqc(V)$, using~\cite[Thm.\,3.1.1]{BondalVandenbergh03}.
\smallbreak
\item
\label{it:quaff}%
The above fact is known more generally for $f$ quasi-affine, at least as folklore. The reason is that for $f$ quasi-affine $\Rf_*$ is conservative and we can apply Lemma~\ref{lem:comp-gen}, without the need for $\Rf_*$ to be faithful.
\smallbreak
\item
The above does not hold for a general morphism though, not even a smooth one. Already for projective space $f:\bbP^1_\kk\to \Spec(\kk)$, for $\kk$ a field, the subcategory $\thick(f^*(\Dperf(\kk)))$ is only a proper part of $\Dperf(\bbP^1_\kk)$, equivalent to~$\Dperf(\kk)$ under~$\RR f_*$. For instance, the object $\cO_{\bbP^1}(1)$ does not belong to it.
\end{enumerate}
\end{Rems}

\begin{Cor}
If $f:V\to X$ is \emph{finite} and \'etale then $\Dperf(V)\cong \bbA_f\MMod_{\Dperf(X)}$.
\end{Cor}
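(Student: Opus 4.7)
The plan is to combine Theorem~\ref{thm:Dqc} with the ``moreover'' clause of Theorem~\ref{thm:main}. By Theorem~\ref{thm:Dqc} (a finite morphism being automatically separated), we have an equivalence $\Dqc(V)\isoto \bbA_f\MModcat{\Dqc(X)}$ identifying $f^*$ with $F_{\bbA_f}$ and $\Rf_*$ with $U_{\bbA_f}$. Passing to compact objects on the left yields $\Dperf(V)$ by the Bondal--van den Bergh result recalled in Section~\ref{se:NT}; the task is therefore to compute the compact part $(\bbA_f\MModcat{\Dqc(X)})^c$ and show it equals $\bbA_f\MMod_{\Dperf(X)}$.

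To invoke the strengthened conclusion of Theorem~\ref{thm:main}, I only need to verify $\bbA_f(\Dperf(X))\subseteq \Dperf(X)$. Half of this is automatic: since $\Rf_*$ commutes with coproducts by Theorem~\ref{thm:Dqc}, its left adjoint $f^*$ preserves compact objects by Lemma~\ref{lem:comp-gen}. So the real content is that $\Rf_*$ sends perfect complexes to perfect complexes, and this is exactly where the finiteness hypothesis is used.

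Since $f$ is finite, it is affine (so $\Rf_*=f_*$ is exact) and, being finite and flat, $f_*\cO_V$ is a finite locally free, hence perfect, $\cO_X$-module. Zariski-locally on~$X$ we may assume $X=\Spec A$ and $V=\Spec B$ with $B$ finite projective over~$A$; any perfect $B$-complex then remains perfect after restriction of scalars along $A\to B$, since a finite projective $B$-module is in particular a finite projective $A$-module. Consequently $\Rf_*(\Dperf(V))\subseteq \Dperf(X)$, and hence $\bbA_f=\Rf_*f^*$ preserves compacts.

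Applying Theorem~\ref{thm:main} now gives $(\bbA_f\MModcat{\Dqc(X)})^c=\bbA_f\MMod_{\Dperf(X)}$, and transporting this equality across the equivalence of Theorem~\ref{thm:Dqc} yields $\Dperf(V)\cong \bbA_f\MMod_{\Dperf(X)}$. No step is genuinely hard; the only non-formal input is the purely local computation showing that $f_*$ preserves perfection, which rests on the finite-flat hypothesis.
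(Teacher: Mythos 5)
Your proof is correct and follows essentially the same route as the paper: both reduce to the ``moreover'' clause of Theorem~\ref{thm:main} by checking that $\bbA_f$ preserves perfect complexes, using that $f$ finite flat makes $f_*\cO_V$ finite locally free. The only cosmetic difference is that the paper verifies compact-preservation via the projection formula $\bbA_f\cong f_*(\cO_V)\otimes-$, whereas you check directly (and locally) that $f_*$ sends perfect complexes to perfect complexes; both rest on the same finite-flat input.
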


\begin{proof}
In that case, $f$ is affine hence exact and $A_f=\Rf_*(\cO_V)=f_*(\cO_V)$ is a flat and locally finitely generated $\cO_X$-module, hence a vector bundle on~$X$. In particular $\bbA_f\cong A_f\otimes-$ preserves perfect complexes and we can apply the ``moreover part" of Theorem~\ref{thm:main}.
\end{proof}

%------------------------------------------------------------------------------
%------------------------------------------------------------------------------
\medbreak
\noindent\textbf{Acknowledgements\,:} I thank Ivo Dell'Ambrogio and Chuck Weibel for helpful discussions, as well as Amnon Neeman for pointing out Remark~\ref{rem:f^*-comp}\,\eqref{it:quaff} and for the reference to~\cite{HallRydth14pp}, thus simplifying my old proof of Theorem~\ref{thm:main}.
%------------------------------------------------------------------------------

\bibliographystyle{alpha}%
%\bibliography{TG-articles}

%------------------------------------------------------------------------------
%------------------------------------------------------------------------------
\end{document}